\date{25 April 2014 bis}
\title[Complete Modules]{A Separated Cohomologically Complete Module is 
Complete}
\author{Amnon Yekutieli}
\address{Department of  Mathematics,
Ben Gurion University, Be'er Sheva 84105, Israel}
\email{amyekut@math.bgu.ac.il}
\thanks{{\em Mathematics Subject Classification} 2010.
Primary: 13B35; Secondary: 18G10, 13J10.}
\keywords{adic completion, derived completion}  
\newtheorem{thm}[equation]{Theorem}
\newtheorem{lem}[equation]{Lemma}
\theoremstyle{definition}
\newtheorem{rem}[equation]{Remark}
\newtheorem{exa}[equation]{Example}
\newcommand{\xar}{\xrightarrow}
\newcommand{\opn}{\operatorname}
\newcommand{\cat}[1]{\operatorname{\mathsf{#1}}}
\newcommand{\rmitem}[1]{\item[\text{\textup{(#1)}}]}
\newcommand{\mfrak}[1]{\mathfrak{#1}}
\newcommand{\mrm}[1]{\mathrm{#1}}
\newcommand{\La}{\Lambda}
\newcommand{\de}{\delta}
\renewcommand{\b}{\mfrak{b}}
\renewcommand{\a}{\mfrak{a}}
\newcommand{\K}{\mathbb{K}}
\newcommand{\Z}{\mathbb{Z}}
\newcommand{\N}{\mathbb{N}}
\newcommand{\DL}{\mrm{L}}
\newcommand{\tup}[1]{\textup{#1}}
\newcommand{\bsym}[1]{\boldsymbol{#1}}
\newcommand{\ot}{\otimes}
\newcommand{\what}[1]{\widehat{#1}}
\renewcommand{\d}{\mathrm{d}}
\newcommand{\lb}{\linebreak}
\begin{document}

\begin{abstract}
We prove two theorems on cohomologically complete complexes. \lb These theorems 
are inspired by, and yield an alternative proof of, a recent theorem of P. 
Schenzel on complete modules. 
\end{abstract}

\maketitle

Throughout $A$ denotes a nonzero commutative ring. (We do not assume $A$ is 
noetherian.) The category of $A$-modules is denoted by $\cat{Mod} A$, 
its category of unbounded complexes is $\cat{C}(\cat{Mod} A)$,
and the derived category is $\cat{D}(\cat{Mod} A)$. 

Let $\a$ be a finitely generated ideal in $A$. 
Recall that the $\a$-adic completion of an $A$-module $M$ is 
\[ \La_{\a}(M) := \lim_{\leftarrow k} \bigl( (A / \a^k) \ot_A M \bigr) . \] 
The module $M$ is called {\em $\a$-adically complete} (resp.\ {\em 
$\a$-adically separated}) if the canonical homomorphism 
$\tau_{M} : M \to \La_{\a}(M)$ is bijective (resp.\ injective). 
It is known that the completion $\La_{\a}(M)$ is $\a$-adically complete
(see \cite[Corollary 3.6]{Ye}).

The additive functor 
\[ \La_{\a} : \cat{Mod} A \to \cat{Mod} A \]
has a left derived functor
\[ \DL \La_{\a} : \cat{D}(\cat{Mod} A) \to \cat{D}(\cat{Mod} A) . \]
A complex $M \in \cat{D}(\cat{Mod} A)$ is called {\em cohomologically 
$\a$-adically complete} if the canonical morphism 
$\tau_{M}^{\DL} : M \to \DL \La_{\a} (M)$ in $\cat{D}(\cat{Mod} A)$ is an 
isomorphism. We say that an $A$-module $M$ is cohomologically complete if it is 
so as a complex,  using the standard embedding 
$\cat{Mod} A \to \cat{D}(\cat{Mod} A)$.
See \cite{PSY1} for more details. 

The next example (taken from \cite{Ye}) shows an anomalous
module $M$: it is cohomologically $\a$-adically complete, but not 
$\a$-adically complete.

First we have to recall some concepts from \cite{Ye}.
Assume $A$ is $\a$-adically complete, and let $Z$ be some set. 
The module of finitely supported functions
$f : Z \to A$, denoted by   $\opn{F}_{\mrm{fin}}(Z, A)$, is free with basis 
the collection $\{ \de_z \}_{z \in Z}$ of delta functions. Its $\a$-adic 
completion is canonically isomorphic to the module 
$\opn{F}_{\mrm{dec}}(Z, A)$ of {\em $\a$-adically decaying functions}; see 
\cite[Definition 2.1 and Corollary 2.9]{Ye}.
An $A$-module $P$ is called {\em $\a$-adically free} if it is isomorphic to 
$\opn{F}_{\mrm{dec}}(Z, A)$ for some set $Z$.

\begin{exa} \label{exa:1}
Let $A := \K[[t]]$, the power series ring in a variable $t$ over a field
$\K$, and $\a := (t)$. 
Define the $\a$-adically free $A$-modules 
$P^0 = P^{-1} := \opn{F}_{\mrm{dec}}(\N, A)$.
Let $\d : P^{-1} \to P^0$ be the homomorphism 
$\d(\de_i) := t^i \de_i$. 
Define $P := ( P^{-1} \xar{ \ \d \ } P^0 )$, a complex concentrated in degrees 
$-1, 0$; and let $M := \opn{H}^0(P) \in \cat{Mod} A$. 

Consider the canonical surjection $\pi^0 : P^0 \to M$, and  
$m := \pi^0(\sum_{i \in \N} t^i \de_i) \in M$.
The element $m$ is nonzero, but it belongs to 
$\cap_{i \in \N} \a^i M$. 
Therefore $M$ is not an $\a$-adically separated module, and hence it is not an 
$\a$-adically complete module. 
(This was already noticed in \cite[Example 2.5]{Si2}.)

On the other hand, the canonical homomorphism $\pi : P \to M$ is a 
quasi-isomorph\-ism, so according to \cite[Theorem 1.15]{PSY2} the module $M$ 
is cohomologically $\a$-adically complete.
\end{exa}

\begin{rem}
Here are a few words regarding the history and background. The total left 
derived functor $\DL \La_{\a}$ was first studied in \cite{AJL}, following 
earlier work on the left derived functors 
$\DL_i \La_{\a} = \opn{H}^{-i}(\DL \La_{\a})$, 
mostly in \cite{GM}. See also \cite{Ma} and \cite{Si1}.

Many important properties of $\DL \La_{\a}$ can be found in the paper 
\cite{AJL}. However, we prefer to quote \cite{PSY1, PSY2}, where the relevant 
theory was developed further. 

The definition of cohomologically complete complexes above was introduced in 
\cite{PSY1}. The name actually originates in \cite{KS}, but the definition 
there is different (yet equivalent, as proved in \cite[Theorem 1.4]{PSY1}).
\end{rem}

Recently we came across a remarkable new result of P. Schenzel
(\cite[Theorem 1.1]{Sc}, which is the noetherian case of Theorem \ref{thm:4} 
below). While trying to understand this result, we 
discovered the next theorem, which explains the anomaly of the 
module $M$ in Example \ref{exa:1}. 

Let us denote by $\cat{D}^{\mrm{b}}(\cat{Mod} A)$ the full subcategory of 
$\cat{D}(\cat{Mod} A)$ consisting of complexes with bounded cohomologies.

\begin{thm} \label{thm:2}
Let $A$ be a noetherian commutative ring, let $\a$ be an ideal in $A$, and let 
$M \in \cat{D}^{\mrm{b}}(\cat{Mod} A)$. The following conditions are 
equivalent\tup{:}
\begin{enumerate}

\rmitem{i} The $A$-modules $\opn{H}^j(M)$ are $\a$-adically complete for all
$j \in \Z$. 

\rmitem{ii} The complex $M$ is $\a$-adically cohomologically complete, and the 
$A$-modules \lb $\opn{H}^j(M)$ are $\a$-adically separated for all $j \in \Z$.

\end{enumerate}
\end{thm}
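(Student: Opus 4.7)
The plan reduces both implications to a single \emph{Key Lemma}: for $A$ noetherian and $\a$ finitely generated, every $\a$-adically complete $A$-module is cohomologically $\a$-adically complete. I would prove this via $\a$-adically free resolutions from \cite{Ye}: a complete module $N$ admits a resolution $P \to N$ by modules of the form $\opn{F}_{\mrm{dec}}(Z, A)$; since such modules are themselves complete and $\La_\a$-acyclic, the termwise completion satisfies $\La_\a(P) = P$ and computes $\DL\La_\a(N) \simeq P \simeq N$ in $\cat{D}(\cat{Mod} A)$. With the Key Lemma in hand, I would argue both implications by induction on the cohomological amplitude of $M$ via truncation distinguished triangles.

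\textbf{Direction (i)$\Rightarrow$(ii).} Completeness implies separation, so only the cohomological completeness of $M$ requires argument. The cohomologically complete complexes form a triangulated subcategory of $\cat{D}(\cat{Mod} A)$ closed under cones \cite{PSY1}. Via the truncation triangles
$$\tau^{\leq j-1}M \to \tau^{\leq j}M \to \opn{H}^j(M)[-j] \xar{+1},$$
combined with the Key Lemma (which makes each complete $\opn{H}^j(M)$ cohomologically complete), induction on the amplitude of $M$ yields that $M$ is cohomologically complete.

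\textbf{Direction (ii)$\Rightarrow$(i).} Let $t$ be the top nonzero cohomological degree of $M$, and apply $\DL\La_\a$ to
$$\tau^{\leq t-1}M \to M \to \opn{H}^t(M)[-t] \xar{+1}.$$
Taking $\opn{H}^t$, and using that $\DL\La_\a$ does not raise the upper cohomological bound (on a module it is concentrated in non-positive degrees) and that $M \cong \DL\La_\a M$, one obtains $\opn{H}^tM \iso \DL_0\La_\a(\opn{H}^tM)$. Set $N := \opn{H}^t(M)$ and consider the short exact sequence
$$0 \to N \to \La_\a(N) \to Q \to 0,$$
where injectivity uses the separation hypothesis and $\La_\a(N)$ is cohomologically complete by the Key Lemma applied to the automatically complete module $\La_\a(N)$. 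The induced degree-zero fragment of the derived-completion long exact sequence reads
$$\DL_1\La_\a(Q) \to N \to \La_\a(N) \to \DL_0\La_\a(Q) \to 0,$$
and since the middle arrow is the injective completion map of the separated $N$, we conclude $Q \cong \DL_0\La_\a(Q)$. On the other hand, the standard identity $\La_\a(N)/\a\La_\a(N) \cong N/\a N$ forces $\a Q = Q$, i.e., $Q$ is $\a$-divisible, and $\a$-divisible modules have $\DL\La_\a = 0$ (a standard fact from the derived-completion machinery, cf.\ \cite{PSY1}). Hence $Q = 0$ and $N$ is complete; by the Key Lemma, $\opn{H}^t(M)[-t]$ is cohomologically complete, so the truncation triangle forces $\tau^{\leq t-1}M$ to be cohomologically complete with separated cohomologies, and induction on amplitude concludes. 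The main obstacle is controlling the interplay between $\DL\La_\a$ and naive $\La_\a$ in the SES argument: the separation hypothesis is used precisely to make $N \to \La_\a(N)$ injective, which is what lets the long exact sequence collapse to give $Q \cong \DL_0\La_\a(Q)$ and hence, via $\a$-divisibility, $Q = 0$.
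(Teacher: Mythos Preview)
Your direction (i)$\Rightarrow$(ii) is fine and matches the paper's: your Key Lemma is essentially \cite[Theorem~1.21]{PSY2}, which the paper cites directly.

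For (ii)$\Rightarrow$(i) your overall inductive scheme via truncation is the same as the paper's, but your argument for the completeness of the top cohomology $N=\opn{H}^t(M)$ is different and contains a genuine error. The claim ``$\a$-divisible modules have $\DL\La_{\a}=0$'' is \emph{false}: over $A=\Z$ with $\a=(p)$, the Pr\"ufer module $Q=\Z[1/p]/\Z$ satisfies $\a Q=Q$, yet from the short exact sequence $0\to\Z\to\Z[1/p]\to Q\to 0$ one computes $\DL_1\La_{\a}(Q)\cong\Z_p\neq 0$. So your final step does not go through as stated.

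The fix is that you only need $\DL_0\La_{\a}(Q)=0$, and in the noetherian (more generally, weakly proregular) setting one has the standard identification $\DL_0\La_{\a}=\La_{\a}$; since $\a Q=Q$ forces $\La_{\a}(Q)=0$, you are done. In fact, once you know $\DL_0\La_{\a}=\La_{\a}$, your earlier step already finishes the argument: you showed that the natural map $N\to\DL_0\La_{\a}(N)$ is an isomorphism, and composing with $\DL_0\La_{\a}(N)\xrightarrow{\sim}\La_{\a}(N)$ gives exactly $\tau_N$, so the entire SES detour through $Q$ is unnecessary.

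By contrast, the paper's argument for the top cohomology is more elementary and avoids both $\DL_0\La_{\a}=\La_{\a}$ and any long exact sequence in derived completion. It chooses a resolution $P\to M$ by $\a$-adically free (hence complete) modules with $\sup(P)=t$, and then uses only two facts: $\La_{\a}$ preserves surjections, and $P^t$ is complete. From the commutative square for $\pi:P^t\twoheadrightarrow\opn{H}^t(P)$ one reads off that $\tau_{\opn{H}^t(P)}$ is surjective; separatedness gives injectivity. This is shorter and stays at the level of ordinary completion.
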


\begin{proof}
(i) $\Rightarrow$ (ii): This is immediate from 
\cite[Theorem 1.21]{PSY2}.

\medskip \noindent
(ii) $\Rightarrow$ (i): Here we have to do some work. 
Recall that for a graded $A$-module $N = \bigoplus_{i \in \Z} N^i$ its 
amplitude is 
\[ \opn{amp}(N) := \opn{sup}(N) - \opn{inf}(N) \in \Z \cup \{ \pm \infty \} . \]
The amplitude satisfies 
$\opn{amp}(N) = - \infty$ iff $N = 0$, and $\opn{amp}(N) < \infty$ iff $N$ is 
bounded. Cf.\ \cite[formulas (2.3)-(2.5)]{PSY1}. We proceed by induction on \lb
$\opn{amp}(\opn{H}(M))$. 

If $M = 0$ there is nothing to prove. So let us assume that 
$0 \leq \opn{amp}(\opn{H}(M)) < \infty$. Let $j := \opn{sup}(\opn{H}(M))$. 
According to \cite[Theorem 1.15]{PSY2}, there a quasi-isomorphism 
$P \to M$, where $P$ is a complex of $\a$-adically free modules,
and $\opn{sup}(P) = j$. 
Let $\pi : P^j \to \opn{H}^j(P)$ be the canonical surjection.
Since $\tau : \bsym{1} \to \La_{\a}$ is a natural transformation, 
there is a commutative diagram 
\[ \UseTips \xymatrix @C=8ex @R=5ex {
P^j
\ar[r]^(0.5){\pi}
\ar[d]_{\tau_{P^j}}
&
\opn{H}^j(P)
\ar[d]^{\tau_{\opn{H}^j(P)}}
\\
\La_{\a}(P^j)
\ar[r]^(0.4){\La_{\a}(\pi)}
&
\La_{\a}(\opn{H}^j(P))
\ \  .
} \]
We know that the functor $\La_{\a}$ 
preserves surjections, so $\La_{\a}(\pi)$ is also surjective. Since $P^j$ is 
complete, the homomorphism $\tau_{P^j}$ is bijective. 
Therefore $\tau_{\opn{H}^j(P)}$ is surjective. 
On the other hand, by assumption the module 
$\opn{H}^j(P) \cong \opn{H}^j(M)$ is separated, so the homomorphism 
$\tau_{\opn{H}^j(P)}$ is injective. Thus $\tau_{\opn{H}^j(P)}$ is 
bijective, and we conclude that $\opn{H}^j(M)$ is 
$\a$-adically complete.

Using smart truncation of $M$ at $j$ there is a distinguished triangle 
\[ M' \to M \to \opn{H}^j(M)[-j] \to M'[1] \]
in $\cat{D}(\cat{Mod} A)$, such that 
$\opn{H}^{j'}(M') \cong \opn{H}^{j'}(M)$ for all $j' < j$, and 
$\opn{H}^{j'}(M') = 0$ for all $j' \geq j$. By the implication 
(i) $\Rightarrow$ (ii) we know that the module $\opn{H}^j(M)$ is 
cohomologically complete. Since the category 
$\cat{D}(\cat{Mod} A)_{\a \tup{-com}}$ is triangulated, it follows that $M'$ is
also cohomologically complete. But 
$\opn{amp}(\opn{H}(M')) < \opn{amp}(\opn{H}(M))$, so induction tells us that 
the modules $\opn{H}^{j'}(M')$ are all complete. 
\end{proof}

\begin{rem}
We do not know whether Theorem \ref{thm:2} holds without assuming that 
$\opn{H}(M)$ is bounded. Perhaps ideas in \cite{Si1} can shed some light on 
this 
question. 
\end{rem}

Here is our second new result on cohomologically complete complexes. 

\begin{thm} \label{thm:3}
Let $A$ be a noetherian commutative ring, let $\a_1, \ldots, \a_n$ 
be ideals in $A$,  let $\a := \a_1 + \cdots + \a_n$, and
let $M \in \cat{D}(\cat{Mod} A)$. The following conditions are 
equivalent\tup{:}
\begin{enumerate}
\rmitem{i} $M$ is $\a$-adically cohomologically complete. 
\rmitem{ii} $M$ is $\a_i$-adically cohomologically complete for all
$i = 1, \ldots, n$.
\end{enumerate}
\end{thm}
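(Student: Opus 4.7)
My plan is to derive Theorem~\ref{thm:3} from two standard facts about $\DL \La_\a$ over a noetherian ring $A$:

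\medskip \noindent
\textup{(a)} \emph{Composition formula:} for any two ideals $\b_1, \b_2$ in $A$,
\[ \DL \La_{\b_1 + \b_2} \cong \DL \La_{\b_1} \circ \DL \La_{\b_2} \]
as functors on $\cat{D}(\cat{Mod} A)$.

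\medskip \noindent
\textup{(b)} \emph{Idempotence:} for every $N \in \cat{D}(\cat{Mod} A)$, the complex $\DL \La_\b(N)$ is cohomologically $\b$-adically complete; equivalently, the essential image of $\DL \La_\b$ is contained in the full subcategory of cohomologically $\b$-adically complete complexes.

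\medskip
Fact (b) is contained in the circle of results of \cite[Theorem 1.21]{PSY2}, and an easy induction on $n$ upgrades (a) to the $n$-ary formula $\DL \La_\a \cong \DL \La_{\a_1} \circ \cdots \circ \DL \La_{\a_n}$.

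Granted (a) and (b) the theorem follows quickly. For (ii)~$\Rightarrow$~(i), iterate (a) and collapse from the inside out using $\DL \La_{\a_i}(M) \cong M$:
\[ \DL \La_\a(M) \cong \DL \La_{\a_1}\bigl( \cdots \DL \La_{\a_{n-1}}(\DL \La_{\a_n}(M)) \cdots \bigr) \cong \DL \La_{\a_1}(\cdots \DL \La_{\a_{n-1}}(M) \cdots) \cong \cdots \cong M . \]
For (i)~$\Rightarrow$~(ii), fix $i$ and set $\a_i' := \sum_{j \neq i} \a_j$; symmetry of (a) in its two summands gives
\[ M \cong \DL \La_\a(M) \cong \DL \La_{\a_i}\bigl( \DL \La_{\a_i'}(M) \bigr) , \]
so $M$ lies in the essential image of $\DL \La_{\a_i}$, whence by (b) it is cohomologically $\a_i$-adically complete.

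\textbf{Proof of (a).} I would use the two standard identifications
\[ \DL \La_\b(M) \cong \DR \opn{Hom}_A\bigl(\DR \Ga_\b(A),\, M\bigr)
\qquad \text{and} \qquad
\DR \Ga_\b(N) \cong \DR \Ga_\b(A) \ot_A^{\DL} N , \]
both valid in the noetherian setting and provable via the telescope / infinite dual Koszul model of $\DR \Ga_\b$ (see \cite{AJL}, \cite{PSY1}). At the underived level one has $\Ga_{\b_1 + \b_2} = \Ga_{\b_1} \circ \Ga_{\b_2}$, and the noetherian hypothesis ensures that each $\Ga_{\b_i}$ preserves injectives (Matlis theory), so this composition formula lifts to the derived level: $\DR \Ga_{\b_1 + \b_2} \cong \DR \Ga_{\b_1} \circ \DR \Ga_{\b_2}$. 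Applying this identity to $A$ and using the tensor identification yields
\[ \DR \Ga_{\b_1 + \b_2}(A) \cong \DR \Ga_{\b_1}(A) \ot^{\DL}_A \DR \Ga_{\b_2}(A) . \]
Substituting into the $\DR\opn{Hom}$ identification and applying tensor-Hom adjunction produces (a).

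\textbf{Main obstacle.} The only substantive step is fact (a); the theorem itself is then a formal consequence of (a), (b), and the symmetry of (a). The noetherian hypothesis is used only in (a), where it guarantees that injectives remain $\Ga_{\b_i}$-acyclic, allowing the passage of the composition formula from underived to derived functors.
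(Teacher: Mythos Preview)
Your argument is correct. The composition formula $\DL\La_{\b_1+\b_2}\cong \DL\La_{\b_1}\circ\DL\La_{\b_2}$ together with idempotence of $\DL\La_{\b}$ does the job; the only detail you leave implicit is that the chain of isomorphisms in (ii)~$\Rightarrow$~(i) is compatible with the unit $\tau^{\DL}_M$ (equivalently, that ``$M$ is isomorphic to something in the essential image of $\DL\La_{\a}$'' already forces $\tau^{\DL}_M$ to be an isomorphism), but this follows immediately from (b) and naturality.

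The paper's proof is the same argument unwound at the level of explicit complexes. Instead of the abstract identity $\DL\La_{\b}(M)\cong\DR\opn{Hom}_A(\DR\Ga_{\b}(A),M)$, it uses the concrete model $\opn{Hom}_A(\opn{Tel}(A;\bsym{a}),M)$ and the criterion from \cite[Corollary~5.25]{PSY1}. Your tensor decomposition $\DR\Ga_{\a}(A)\cong\DR\Ga_{\a_1}(A)\ot^{\DL}_A\cdots\ot^{\DL}_A\DR\Ga_{\a_n}(A)$ is realized there as the isomorphism $T\cong T_1\ot_A\cdots\ot_A T_n$ of telescope complexes, and your idempotence fact (b) appears as the homotopy equivalence $u_i:T_i\ot_A T_i\to T_i$ of \cite[Lemma~7.9]{PSY1}. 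So the two proofs differ mainly in packaging: you derive the composition formula on the $\DR\Ga$ side via ``$\Ga_{\b_i}$ preserves injectives over a noetherian ring'' and then dualize, whereas the paper stays on the telescope side throughout and never invokes injectives. Your route is more conceptual and makes the role of the noetherian hypothesis transparent (it enters only to pass $\Ga_{\b_1+\b_2}=\Ga_{\b_1}\circ\Ga_{\b_2}$ to derived functors); the paper's route is more self-contained within the telescope formalism of \cite{PSY1} and visibly extends to the weakly proregular case without change.
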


\begin{proof}
For every $i$ choose a finite sequence $\bsym{a}_i$ that generates the ideal 
$\a_i$. Let $\bsym{a} := \lb (\bsym{a}_1, \ldots, \bsym{a}_n)$, the 
concatenated sequence, and let $\bsym{b}_i$ be the sequence gotten from 
$\bsym{a}$ be deleting $\bsym{a}_i$.
Define 
$T := \opn{Tel}(A; \bsym{a})$, 
$T_i := \opn{Tel}(A; \bsym{a}_i)$ and
$S_i := \opn{Tel}(A; \bsym{b}_i)$, the telescope complexes
from \cite[Definition 5.1]{PSY1}. Note that
\[ T \cong T_{1} \ot_A \cdots \ot_A T_{n} \cong T_i \ot_A S_i \]
in $\cat{C}(\cat{Mod} A)$. 

There is a canonical homomorphism $u : T \to A$
in $\cat{C}(\cat{Mod} A)$, see 
\cite[formula 5.6]{PSY1}. According to \cite[Corollary 5.25]{PSY1},
$M$ is $\a$-adically cohomologically complete iff the homomorphism 
\[ \opn{Hom}(u, \bsym{1}_M) : M \to  \opn{Hom}_A(T, M) \]
in $\cat{C}(\cat{Mod} A)$ is a quasi-isomorphism. 
Likewise there are homomorphisms $u_i : T_i \to A$, and 
$M$ is $\a_i$-adically cohomologically complete iff the homomorphism 
\[ \opn{Hom}(u_i, \bsym{1}_M) : M \to  \opn{Hom}_A(T_i, M) \]
is a quasi-isomorphism. 

Let us prove the implication (i) $\Rightarrow$ (ii).
For every index $i$ there is a diagram 
\[ \UseTips \xymatrix @C=16ex @R=8ex {
M
\ar[r]^{ \opn{Hom}(u, \bsym{1}_M) }
\ar[d]_{ \opn{Hom}(u_i, \bsym{1}_M) }
&
\opn{Hom}_A(T, M)
\ar[d]^{ \opn{Hom}(\bsym{1}_T \ot u_i, \bsym{1}_M) }
\\
\opn{Hom}_A(T_i, M)
\ar[r]^(0.45){ \opn{Hom}(u \ot_A \bsym{1}_{T_i}, \bsym{1}_M) }
&
\opn{Hom}_A(T \ot_A T_i, M)
\ \  .
} \]
in $\cat{C}(\cat{Mod} A)$, which is commutative up to sign. 
By assumption the homomorphism 
$\opn{Hom}(u, \bsym{1}_M)$ is a quasi-isomorphism. 
By adjunction there is an isomorphism 
\[ \opn{Hom}_A(T \ot_A T_i, M) \cong \opn{Hom}_A(T_i, \opn{Hom}_A(T, M)) \]
in $\cat{C}(\cat{Mod} A)$, and it sends 
\[ \opn{Hom}(u \ot_A \bsym{1}_{T_i}, \bsym{1}_M) \mapsto
\pm \opn{Hom}(\bsym{1}_{T_i}, \opn{Hom}_A(u, \bsym{1}_M)) . \]
Because $T_i$ is K-projective, we see that 
$\opn{Hom}(u \ot_A \bsym{1}_{T_i}, \bsym{1}_M)$ is a quasi-iso\-morphism. 

By \cite[Lemma 7.9]{PSY1} the homomorphism 
$u_i : T_i \ot_A T_i \to T_i$ is a homotopy equivalence.
This implies that 
\[ \bsym{1}_T \ot u_i  = \pm  \bsym{1}_{T_i} \ot \bsym{1}_{S_i} \ot u_i : 
S_i \ot_A T_i \ot_A T_i \to S_i \ot_A T_i  \]
is a  homotopy equivalence, and therefore 
$\opn{Hom}(\bsym{1}_T \ot u_i, \bsym{1}_M)$
is a quasi-iso\-morph\-ism. 
The conclusion is that $\opn{Hom}(u_i, \bsym{1}_M)$ is a quasi-iso\-morph\-ism. 

Finally we prove that (ii) $\Rightarrow$ (i).
The assumption is that $M \to \opn{Hom}_A(T_i, M)$ are quasi-isomorphisms.
Applying $\opn{Hom}_A(T_2, -)$ to the quasi-isomorph\-ism 
$M \to \opn{Hom}_A(T_1, M)$, we get a quasi-isomorphism 
\[ \opn{Hom}_A(T_2, M) \to \opn{Hom}_A(T_2, \opn{Hom}_A(T_1, M)) . \]
Continuing this way we end up with a sequence of quasi-isomorphisms
\[ M \to \opn{Hom}_A(T_n, M) \to 
\cdots \to \opn{Hom}_A(T_{n}, \cdots, \opn{Hom}_A(T_1, M) \cdots ) .  \]
Using adjunction we get an isomorphism 
\[ \opn{Hom}_A(T_{n}, \cdots, \opn{Hom}_A(T_1, M) \cdots ) \cong 
\opn{Hom}_A(T_{1} \ot_A \cdots \ot_A T_{n}, M)   \]
in $\cat{C}(\cat{Mod} A)$. 
Up to sign, the resulting quasi-isomorphism 
$M \to \opn{Hom}_A(T, M)$ is \lb $\opn{Hom}(u , \bsym{1}_M)$. 
\end{proof}

\begin{rem}
In Theorems \ref{thm:2} and \ref{thm:3} we can remove the assumption that the 
ring $A$ is noetherian, and replace it by the weaker assumption that the ideals 
$\a, \a_1, \ldots, \a_n$ are {\em weakly proregular}; see \cite{PSY1}. 
This observation was communicated to us by L. Shaul.
\end{rem}

\begin{lem} \label{lem:1}
Assume $A$ is noetherian, and $\a$ is a principal ideal, generated by 
an element $a$. Let $M$ be an $A$-module. 
\begin{enumerate}
\item The following conditions are equivalent\tup{:}
\begin{enumerate}
\rmitem{i} $M$ is $\a$-adically cohomologically complete.
\rmitem{ii}  $\opn{Ext}^j_A(A_{a}, M) = 0$ for $j = 0, 1$.
\end{enumerate}

\item If $M$ is $\a$-adically separated, then $\opn{Ext}^0_A(A_{a}, M) = 0$.
\end{enumerate}
\end{lem}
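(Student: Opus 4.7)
The plan is to exploit the fact that, for a principal ideal $\a = (a)$, the localization $A_a$ has projective dimension at most one over $A$. This will force the vanishing of $\opn{Ext}^{0,1}_A(A_a, M)$ to promote itself to the vanishing of the whole complex $\opn{R}\Hom_A(A_a, M)$, which I will then identify with the fibre of the completion map in the derived category.

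First I would write down an explicit length-one free resolution of $A_a$. Set $P := \opn{F}_{\mrm{fin}}(\N, A)$ and let $d : P \to P$ be the map $d(\de_i) := \de_i - a \de_{i+1}$. A short induction shows $d$ is injective (the equation $d(\sum c_i \de_i) = 0$ forces $c_0 = 0$ and then $c_{i+1} = a c_i$), and the relation $\de_i = a \de_{i+1}$ in $\opn{coker}(d)$ gives an isomorphism $\opn{coker}(d) \cong A_a$ via $\de_i \mapsto a^{-i}$. Consequently $\opn{Ext}^j_A(A_a, M) = 0$ automatically for every $j \geq 2$, and condition (ii) of part (1) is equivalent to $\opn{R}\Hom_A(A_a, M) = 0$ in $\cat{D}(\cat{Mod} A)$.

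For part (1), I would use the telescope complex $T = \opn{Tel}(A; (a))$ with augmentation $u : T \to A$, exactly as in the proof of Theorem \ref{thm:3}: by \cite[Corollary 5.25]{PSY1}, $M$ is $\a$-adically cohomologically complete iff $\opn{Hom}(u, \bsym{1}_M) : M \to \opn{Hom}_A(T, M)$ is a quasi-isomorphism. Since $T$ is a bounded complex of free modules representing $\opn{R}\Ga_{\a}(A)$ in the derived category, a direct cohomology computation shows that $\opn{Cone}(u) \simeq A_a$ concentrated in degree zero. Applying $\opn{Hom}_A(-, M)$ to the triangle $T \xar{u} A \to \opn{Cone}(u) \to T[1]$ then yields a triangle
\[ \opn{R}\Hom_A(A_a, M) \to M \xar{\opn{Hom}(u, \bsym{1}_M)} \opn{Hom}_A(T, M) \to \opn{R}\Hom_A(A_a, M)[1] , \]
so the middle arrow is a quasi-isomorphism iff $\opn{R}\Hom_A(A_a, M) = 0$. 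Combined with the previous paragraph this gives (i) $\Leftrightarrow$ (ii).

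Part (2) is a short direct argument: given $\phi \in \opn{Hom}_A(A_a, M)$ and $x \in A_a$, factor $x = a^k (x/a^k)$ in $A_a$ for each $k \geq 0$, so $\phi(x) = a^k \phi(x/a^k) \in \a^k M$, and hence $\phi(x) \in \bigcap_k \a^k M = 0$ by separatedness. The step I expect to require the most care is the identification $\opn{Cone}(u) \simeq A_a$; once this is in hand the remainder is a short direct calculation or a formal triangle manipulation in $\cat{D}(\cat{Mod} A)$.
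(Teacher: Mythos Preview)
Your argument is correct. For part~(1) it is essentially the paper's proof in different packaging: the paper cites \cite[Theorem~8.8]{PSY1} to get directly that $M$ is cohomologically $\a$-complete iff $\opn{RHom}_A(A_a,M)=0$, and then exhibits the length-one free resolution of $A_a$ by passing to the subcomplex $\opn{Tel}_+(A;a)\subset\opn{Tel}(A;a)$ (which is exactly your complex $P\xrightarrow{d}P$, reindexed). You instead recover the criterion $\opn{RHom}_A(A_a,M)=0$ from \cite[Corollary~5.25]{PSY1} via the cone triangle on $u$, and write the resolution down by hand; the content is the same.

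Part~(2) is where you genuinely diverge. The paper argues indirectly: it embeds $M$ into its completion $\what M$, invokes Theorem~\ref{thm:2} to see that $\what M$ is cohomologically complete, and then applies part~(1) to conclude $\opn{Hom}_A(A_a,\what M)=0$, whence $\opn{Hom}_A(A_a,M)=0$. Your divisibility argument (\,$\phi(x)=a^k\phi(x/a^k)\in\a^kM$ for all $k$\,) is shorter, entirely elementary, and avoids both the forward reference to Theorem~\ref{thm:2} and the appeal to part~(1). The paper's route has the mild conceptual advantage of illustrating how the two theorems interact, but yours is the cleaner proof of the bare statement.
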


In the lemma, $A_{a}$ is the localization of $A$ w.r.t.\ the element $a$.

\begin{proof}
(1) In the principal case the DG algebra $\opn{C}(A; \bsym{a})$ appearing in 
\cite[Section 8]{PSY1} is just the localized ring $A_{a}$. Thus by 
\cite[Theorem 8.8]{PSY1}, $M$ is $\a$-adically cohomologically complete iff 
$\opn{RHom}_{A}(A_{a}, M) = 0$. 

The telescope complex $\opn{Tel}(A; a)$ from \cite[Definition 
5.1]{PSY1} is a complex of free $A$-modules 
\[ \opn{Tel}(A; a) = \bigl( \cdots \to 0 \to \opn{F}_{\mrm{fin}} (\N, A) \to 
\opn{F}_{\mrm{fin}} (\N, A) \to  0 \to  \cdots \bigr) \]
concentrated in degrees $0, 1$.
The infinite dual Koszul complex
\[ \opn{K}_{\infty}^{\vee}(A; a) = \bigl( \cdots \to 0 \to 
 A \to A_a \to  0 \to  \cdots \bigr) \]
is a complex of flat $A$-modules, also concentrated in degrees $0, 1$.
According to \cite[Lemma 5.7]{PSY1} there is a quasi-isomorphism 
\[ w_a : \opn{Tel}(A; a) \to \opn{K}_{\infty}^{\vee}(A; a) . \]
Looking at the definitions of $\opn{Tel}(A; a)$ and $w_a$, we see that passing 
to the subcomplex
\[ \opn{Tel}_+(A; a) := \bigl(  \cdots \to 0 \to 
\opn{F}_{\mrm{fin}} ([1, \infty], A) \to 
\opn{F}_{\mrm{fin}} (\N, A) \to  0 \to  \cdots \bigr)  \]
of $\opn{Tel}(A; a)$, i.e.\ omitting the module $A \de_0$ in degree $0$, 
we get an induced a quasi-isomorphism 
\[ w_a : \opn{Tel}_+(A; a) \to A_a[-1] .  \]
Therefore  
\[ \opn{RHom}_{A}(A_{a}, M) \cong 
\opn{Hom}_{A} \bigl( \opn{Tel}_+(A; a)[1], M \bigr) \]
has cohomology only in degrees $0, 1$. 

\medskip \noindent
(2) Assume $M$ is $\a$-adically separated, and let 
$\what{M} := \La_{\a}(M)$. Since $\tau_{M} : M \to \what{M}$ is injective, 
we have an injection 
$\opn{Hom}_A(A_{a}, M) \to \opn{Hom}_A(A_{a}, \what{M})$. 
But $\what{M}$ is $\a$-adically complete, so by Theorem \ref{thm:2} it is 
$\a$-adically cohomologically complete, and thus by part (1) we know that 
$\opn{Hom}_A(A_{a}, \what{M}) = 0$.
\end{proof}

\begin{lem} \label{lem:5}
Suppose $f : A \to B$ is a ring homomorphism, $a \in A$, $b := f(a) \in B$, and 
$M \in \cat{Mod} B$. For every $i$ there is a canonical $B$-module isomorphism 
\[ \opn{Ext}^i_B(B_{b}, M) \cong \opn{Ext}^i_A(A_{a}, M) . \]
\end{lem}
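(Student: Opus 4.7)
The statement rests on two standard facts: localization is a flat base change, and the tensor--hom adjunction between extension and restriction of scalars. First I would establish the canonical $B$-module isomorphism $B_{b} \cong B \ot_A A_{a}$, obtained from the universal property of localization (both sides are the initial $B$-module in which $a$, equivalently $b$, acts invertibly).

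Next, because $A_{a}$ is flat over $A$, if I pick any projective resolution $P^\bullet \to A_{a}$ in $\cat{Mod} A$, then applying $B \ot_A -$ yields an exact complex $B \ot_A P^\bullet \to B \ot_A A_{a} \cong B_{b}$. Each term $B \ot_A P^i$ is a projective $B$-module, so this is a projective resolution of $B_{b}$ over $B$. In the telescope language of the paper one could equally take $P^\bullet$ to be $\opn{Tel}_+(A; a)[1]$ from the proof of Lemma \ref{lem:1}, since $B \ot_A \opn{Tel}_+(A; a)$ is, term by term, $\opn{Tel}_+(B; b)$.

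Finally, I would apply the extension-of-scalars adjunction: for every $B$-module $M$ and every $A$-module $N$,
\[ \opn{Hom}_B(B \ot_A N, M) \cong \opn{Hom}_A(N, M) , \]
naturally in $N$ and $B$-linearly, where the right-hand side inherits its $B$-module structure from $M$. Applied term by term to the resolution $P^\bullet$, this gives an isomorphism of cochain complexes of $B$-modules
\[ \opn{Hom}_B(B \ot_A P^\bullet, M) \cong \opn{Hom}_A(P^\bullet, M) , \]
whose cohomologies are $\opn{Ext}^i_B(B_{b}, M)$ on the left and $\opn{Ext}^i_A(A_{a}, M)$ on the right. Independence of the chosen resolution makes the resulting isomorphism canonical. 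No real obstacle arises; the only point requiring verification is that the $B$-module structure produced by the adjunction matches the natural one on each Ext group, which is a direct computation from the definitions.
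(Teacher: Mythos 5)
Your proposal is correct and is essentially the paper's argument: the paper takes the specific free resolution $\opn{Tel}_+(A;a)[1]$ of $A_a$, observes that $B \ot_A \opn{Tel}_+(A;a) \cong \opn{Tel}_+(B;b)$ (your base-change step, verified by inspection rather than via flatness of $A_a$ and balancing of Tor), and then applies the same tensor--hom adjunction to identify the two Hom complexes. Your more general formulation with an arbitrary projective resolution is fine, and you even note the telescope specialization explicitly.
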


\begin{proof}
There is an isomorphism of complexes of $B$-modules
\[ B \ot_A \opn{Tel}_+(A; a) \cong \opn{Tel}_+(B; b) . \]
This induces an isomorphism 
\[ \begin{aligned}
& \opn{Ext}^i_B(B_{b}, M) \cong
\opn{H}^i \bigl( \opn{Hom}_{B} \bigl( \opn{Tel}_+(B; b)[1], M \bigr) \bigr)
\\
& \qquad \cong  
\opn{H}^i \bigl( \opn{Hom}_{A} \bigl( \opn{Tel}_+(A; a)[1], M \bigr) \bigr)
\cong \opn{Ext}^i_A(A_{a}, M) .
\end{aligned} \]
\end{proof}

\begin{lem} \label{lem:2}
Let $\b \subset \a$. If an $A$-module $M$ is $\a$-adically separated, then it 
is also $\b$-adically separated.
\end{lem}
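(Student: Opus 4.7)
My plan is to reduce the statement to the equivalent reformulation of separation as the vanishing of an intersection, and then exploit the inclusion of ideals filtration-wise.

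First I would recall that $\tau_M : M \to \La_{\a}(M)$ has kernel exactly $\bigcap_{k \geq 0} \a^k M$. This comes from identifying $(A/\a^k) \ot_A M \cong M / \a^k M$ via right exactness of the tensor product, so that the canonical map $M \to \lim_{\leftarrow k} M / \a^k M$ has kernel equal to the intersection of the kernels of the individual projections $M \to M / \a^k M$, namely $\bigcap_k \a^k M$. Consequently, $M$ is $\a$-adically separated if and only if $\bigcap_k \a^k M = 0$, and likewise for $\b$.

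Next, the inclusion $\b \subseteq \a$ immediately gives $\b^k \subseteq \a^k$ for all $k$, and hence $\b^k M \subseteq \a^k M$ inside $M$. Taking the intersection over $k$ yields
\[ \bigcap_{k \geq 0} \b^k M \ \subseteq \ \bigcap_{k \geq 0} \a^k M . \]
Under the separation hypothesis on $M$ the right-hand side is zero, so the left-hand side is zero as well, which by the reformulation above means precisely that $M$ is $\b$-adically separated.

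There is essentially no obstacle: the only point requiring any care is the identification $\ker(\tau_M) = \bigcap_k \a^k M$, which is a standard computation with inverse limits and does not use any noetherian or finite-generation hypothesis on $M$. The conclusion then follows purely from the monotonicity of the filtration under shrinking the ideal.
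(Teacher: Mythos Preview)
Your argument is correct and is essentially the same as the paper's: both reduce separation to the vanishing of $\bigcap_k \a^k M$ and then use $\b^k M \subseteq \a^k M$ to conclude. The paper states this in one line, while you additionally spell out the identification $\ker(\tau_M) = \bigcap_k \a^k M$, but the substance is identical.
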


\begin{proof}
Since $\b^i M \subset \a^i M$ for all $i$, we have 
$\bigcap_i \b^i M \subset \bigcap_i \a^i M = 0$. 
\end{proof}

Schenzel's \cite[Theorem 1.1]{Sc} is the next theorem, when $A$ is noetherian.
This condition turns out to be unnecessary.  

\begin{thm} \label{thm:4}
Let $A$ be a commutative ring, let $\a$ be a finitely generated ideal in $A$, 
and let $(a_1, \ldots, a_n)$ be a sequence of elements that generates $\a$. 
The following conditions are equivalent for any $A$-module $M$~\tup{:}
\begin{enumerate}
\rmitem{i} $M$ is $\a$-adically complete. 

\rmitem{ii} $M$ is $\a$-adically separated, and 
$\opn{Ext}^1_A(A_{a_i}, M) = 0$ for all $i = 1, \ldots, n$.
\end{enumerate}
\end{thm}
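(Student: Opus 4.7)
The strategy is a two-step argument. First I would handle the case when $A$ is noetherian by assembling the machinery already in place, namely Theorems \ref{thm:2} and \ref{thm:3} together with Lemmas \ref{lem:1} and \ref{lem:2}. Second, I would remove the noetherian hypothesis by passing to the noetherian subring $A_0 := \Z[a_1, \ldots, a_n] \subseteq A$, the image of the canonical ring map from $\Z[x_1, \ldots, x_n]$, and invoking Lemma \ref{lem:5} to transport the Ext-vanishing condition.

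Consider the noetherian case. For (i) $\Rightarrow$ (ii), completeness trivially implies separatedness, and, viewing $M$ as a bounded complex concentrated in degree $0$, the easy half of Theorem \ref{thm:2} gives that $M$ is $\a$-adically cohomologically complete. Since $\a = (a_1) + \cdots + (a_n)$, Theorem \ref{thm:3} then yields that $M$ is $(a_i)$-adically cohomologically complete for every $i$, and Lemma \ref{lem:1}(1) delivers $\opn{Ext}^1_A(A_{a_i}, M) = 0$. For (ii) $\Rightarrow$ (i), Lemma \ref{lem:2} shows that $\a$-separatedness forces $(a_i)$-separatedness, so Lemma \ref{lem:1}(2) provides $\opn{Ext}^0_A(A_{a_i}, M) = 0$; combined with the hypothesis $\opn{Ext}^1_A(A_{a_i}, M) = 0$, Lemma \ref{lem:1}(1) upgrades this to $(a_i)$-cohomological completeness for each $i$. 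Theorem \ref{thm:3} then promotes these to $\a$-cohomological completeness of $M$, and finally Theorem \ref{thm:2} (ii) $\Rightarrow$ (i), fed with the known $\a$-separatedness, gives that $M$ is $\a$-adically complete.

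For the reduction from general $A$ to $A_0$, let $\a_0 := (a_1, \ldots, a_n) A_0$. Since $\a^k = \a_0^k A$, one has $\a^k M = \a_0^k M$ for every $k$, so the canonical maps $M \to \La_\a(M)$ over $A$ and $M \to \La_{\a_0}(M)$ over $A_0$ coincide as maps of abelian groups. Hence $\a$-completeness over $A$ is equivalent to $\a_0$-completeness over $A_0$, and likewise for separatedness. Lemma \ref{lem:5}, applied to the inclusion $A_0 \hookrightarrow A$ and each $a_i \in A_0$, identifies $\opn{Ext}^1_A(A_{a_i}, M) \cong \opn{Ext}^1_{A_0}((A_0)_{a_i}, M)$. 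Because $A_0$ is noetherian, the case already handled applied to $M$ as an $A_0$-module now yields the theorem.

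The only point requiring any vigilance is the bookkeeping in the reduction: one must confirm that $\a^k M = \a_0^k M$, so that the relevant completion and separatedness notions really are interchangeable between the two rings, and that the Ext-transfer of Lemma \ref{lem:5} matches the correct generator. Everything else is a straightforward assembly of the cited results.
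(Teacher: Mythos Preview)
Your proof is correct and follows essentially the same route as the paper's: the noetherian case is assembled from Theorems~\ref{thm:2} and~\ref{thm:3} together with Lemmas~\ref{lem:1} and~\ref{lem:2} exactly as you do (the paper just packages the intermediate steps into named conditions (n1)--(n3)), and the general case is reduced to a noetherian base via Lemma~\ref{lem:5}. The only cosmetic difference is that the paper descends to the polynomial ring $\Z[t_1,\ldots,t_n]$ itself rather than to its image $A_0 = \Z[a_1,\ldots,a_n]\subseteq A$; both choices are noetherian and the argument goes through identically either way.
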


\begin{proof}
Step 1. Assume $A$ is noetherian. For any $i$ let $\a_i$ be the ideal generated 
by the elements $a_i$. Consider these further conditions:
\begin{enumerate}
\item[(n1)] $M$ is $\a$-adically separated and $\a$-adically cohomologically 
complete.

\item[(n2)] $M$ is $\a$-adically separated, and $\a_i$-adically 
cohomologically complete for all  $i = 1, \ldots, n$.

\item[(n3)] $M$ is $\a$-adically separated, and $\a_i$-adically 
complete for all  $i = 1, \ldots, n$.
\end{enumerate}
By Theorem \ref{thm:2} we have (i) $\Leftrightarrow$ (n1).
By Theorem \ref{thm:3} we have (n1) $\Leftrightarrow$ (n2).
Combining Lemma \ref{lem:2} and Theorem \ref{thm:2} we deduce the equivalence 
(n2) $\Leftrightarrow$ (n3).
Finally, the equivalence (n3) $\Leftrightarrow$ (ii) comes from the combination 
of Lemmas \ref{lem:2} and \ref{lem:1}.

\medskip \noindent
Step 2. Now $A$ is arbitrary. 
Consider the polynomial ring 
$\Z[\bsym{t}] := \Z[t_1, \ldots, t_n]$, the ideal 
$\mfrak{t} := (t_1, \ldots, t_n)$, and the ring homomorphism 
$f : \Z[\bsym{t}] \to A$, $f(t_i) := a_i$. 
Since $\a^k M = \mfrak{t}^k M$ for every $k \in \N$, we see that $M$ is 
$\a$-adically complete (resp.\ separated) iff it is $\mfrak{t}$-adically 
complete (resp.\ separated). On the other hand, by Lemma \ref{lem:5} we know 
that $\opn{Ext}^1_A(A_{a_i}, M) = 0$ iff 
$\opn{Ext}^1_{\Z[\bsym{t}]} (\Z[\bsym{t}]_{t_i}, M) = 0$. 
Since the ring $\Z[\bsym{t}]$ is noetherian, we are done by step 1.
\end{proof}

\medskip \noindent
{\bf Acknowledgments}. 
I wish to thank Liran Shaul for helpful discussions.
Thanks also to the anonymous referee for a careful reading of the paper and for 
several suggestions.


\end{document}